\newcommand{\R}{\mathbb{R}}
\newcommand{\nn}{\ensuremath{\mathcal{N}}}
\newcommand{\cop}{\ensuremath{\mathcal{COP}}}
\def\btheta{\bm\theta}
\newcommand{\plmod}{\ensuremath{\dotplus}}
\newcommand{\1}{\mathbf 1}
\newcommand{\0}{\mathbf 0}
\newtheorem{defin}{Definition}
\newtheorem{theorem}[defin]{Theorem}
\DeclareMathOperator{\diag}{diag}
\title{Corrigendum to ``SPN graphs: when copositive $=$ SPN"\thanks{This work was supported by grant no.\ 2219/15
 by ISF-NSFC joint scientific research program. }
}
\author{
Naomi Shaked-Monderer\thanks{The Max Stern Yezreel Valley College, Yezreel Valley 1930600, Israel.
Email: nomi@technion.ac.il }
}
\begin{document}
\maketitle
\begin{abstract}
\noindent In this corrigendum, an error in the proof of a theorem in [Linear Algebra and its Applications 509 (2016) 82--113] is pointed out.
This theorem states that every graph $T_n$ consisting of $n-2$ triangles sharing a common base is SPN.
An alternative proof is given here for the case $n=5$, but for all $n>5$ it remains open whether $T_n$ is SPN. As a result, the question whether $K_{2,n}$, $n>4$, is SPN
also remains open.
\medskip

\noindent
\textbf{Keywords:}  copositive matrices, SPN matrices

\noindent
\textbf{Mathematical Subject Classification 2010:} 15B48, 15B35

\end{abstract}

We  report an error in the proof of  Theorem 6.4 in the paper ``SPN graphs: when copositive $=$ SPN" \cite{Shaked2016}, that we can only partially correct.
The error is in Lemma 6.2, which is incorrect (the error in the proof of this lemma is in the computation of $P/I$ in the 6th row). As a result, also Corollary 6.3 is incorrect, and Theorem 6.4 and the resulting Theorem 9.2 cannot be deduced. At this point it is unclear whether these two theorems are valid for every $n$. We provide
 in this corrigendum an alternative proof to Theorem 6.4 for the case $n=5$, and change Theorem 9.2 accordingly. For terminology and notations, see the original paper \cite{Shaked2016}.

\section*{$T_5$ and $K_{2,4}$ are SPN}
We begin by recalling some known results about the matrices
\[S(\btheta)=\left(\begin {array}{ccccc} 1&-\cos({\theta_1})&\cos({\theta_1+\theta_2})&\cos({\theta_4+\theta_5})&-\cos({\theta_5})\\
\noalign{\medskip}-\cos({\theta_1})&1&-\cos({\theta_2})&\cos({\theta_2+\theta_3})&\cos({\theta_5+\theta_1})\\
\noalign{\medskip}\cos({\theta_1+\theta_2})&-\cos({\theta_2})&1&-\cos(\theta_3)&\cos({\theta_3+\theta_4})\\
\noalign{\medskip}\cos({\theta_4+\theta_5})&\cos({\theta_2+\theta_3})&-\cos({\theta_3})&1&-\cos({\theta_4})\\
\noalign{\medskip}-\cos({\theta_5})&\cos({\theta_5+\theta_1})&\cos({\theta_3+\theta_4})&-\cos({\theta_4})&1\end {array}
\right),\]
where $\btheta\in \R^5_+$ and $\1^T\btheta\le \pi$. It is known that

\begin{enumerate}
\item[{\rm(a)}]  $S(\btheta)$ is copositive.
\item[{\rm(b)}]  $S(\0)$ is the Horn matrix. In particular, it is an exceptional extremal matrix in $\cop_5$.
\item[{\rm(c)}]  If $\1^T\btheta= \pi$, then $S(\btheta)$ is positive semidefinite, or rank $2$.
\item[{\rm(d)}]  If  $\btheta>0$ and $\1^T\btheta< \pi$,  then $S(\btheta)$ is an exceptional extremal matrix, called a Hildebrand matrix.
\item[{\rm(e)}]  If $\btheta\ne \0$ has at least one zero entry  and $\1^T\btheta< \pi$, then $S(\btheta)$ is an exceptional $\nn$-irreducible matrix.
\end{enumerate}
For (a)--(b), (e), see page 1611  in \cite{DickinsonDuerGijbenHildebrand2013a}. For (d) see  \cite{Hildebrand2012}.
For (c), note that in the case that  $\1^T\btheta=\pi$,     
\begin{multline*}S(\btheta)=\\ \small
\left(\begin {array}{ccccc} 1&-\cos({\theta_1})&\cos({\theta_1+\theta_2})&\cos({\theta_4+\theta_5})&-\cos({\theta_5})\\
\noalign{\medskip}-\cos({\theta_1})&1&-\cos({\theta_2})&-\cos({\theta_4+\theta_5+\theta_1})&\cos({\theta_5+\theta_1})\\
\noalign{\medskip}\cos({\theta_1+\theta_2})&-\cos({\theta_2})&1& \cos({\theta_4+\theta_5+\theta_1+\theta_2})&-\cos({\theta_5+\theta_1+\theta_2})\\
\noalign{\medskip}\cos({\theta_4+\theta_5})&-\cos({\theta_4+\theta_5+\theta_1})&\cos({\theta_4+\theta_5+\theta_1+\theta_2})&1&-\cos({\theta_4})\\
\noalign{\medskip}-\cos({\theta_5})&\cos({\theta_5+\theta_1})&-\cos({\theta_5+\theta_1+\theta_2})&-\cos({\theta_4})&1\end {array}
\right)=\\ \small
\left(\begin{array}{c}
          1 \\
          -\cos(\theta_1) \\
          \cos(\theta_1+\theta_2) \\
          \cos(\theta_4+\theta_5) \\
          -\cos(\theta_5)
        \end{array}
 \right)\left(\begin{array}{c}
          1 \\
          -\cos(\theta_1) \\
          \cos(\theta_1+\theta_2) \\
          \cos(\theta_4+\theta_5) \\
          -\cos(\theta_5)
        \end{array}
 \right)^T +\left(\begin{array}{c}
          0 \\
          \sin(\theta_1) \\
          -\sin(\theta_1+\theta_2) \\
          \sin(\theta_4+\theta_5) \\
          -\sin(\theta_5)
        \end{array}
 \right)\left(\begin{array}{c}
          0 \\
          \sin(\theta_1) \\
          -\sin(\theta_1+\theta_2) \\
          \sin(\theta_4+\theta_5) \\
          -\sin(\theta_5)
        \end{array}
 \right)^T.\qquad\qquad\qquad\qquad\end{multline*}
(See page 1674 in  \cite{DickinsonDuerGijbenHildebrand2013b}.)

Finally, we mention that according to \cite[Corollary 5.8]{DickinsonDuerGijbenHildebrand2013a}, if $A\in \cop_5$ with diagonal $\1$  is not SPN, then there exist  $\btheta\in \R^5_+$, $\1^T\btheta< \pi$ and a permutation matrix $P$ such that $P^TAP\ge S(\btheta)$. 
We can now prove the special case $n=5$ of  \cite[Theorem 6.4]{Shaked2016}.

\begin{theorem}\label{thm:T5}
$T_5$ is SPN.
\end{theorem}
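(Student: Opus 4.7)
My plan is a proof by contradiction, exploiting the characterization recalled just above. Assume $A\in\cop_5$ realizes $T_5$ (so $a_{34}=a_{35}=a_{45}=0$) with unit diagonal (attainable by diagonal congruence) but is \emph{not} SPN; then there exist a permutation matrix $P$ and $\btheta\in\R^5_+$ with $\1^T\btheta<\pi$ such that $B:=P^TAP\ge S(\btheta)$. The matrix $B$ has unit diagonal and three zero off-diagonal entries whose positions form a triangle $T\subseteq\{1,\ldots,5\}$; because $B_{ij}=0\ge S(\btheta)_{ij}$ on $T$, each of the three corresponding entries of $S(\btheta)$ is nonpositive. The strategy is to rule out all ten possible triangles $T$, either by forcing $\1^T\btheta\ge\pi$ outright or by producing an SPN decomposition of $B$ after modifying $\btheta$.

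The ten off-diagonal positions of $S(\btheta)$ split into a cyclic 5-tuple of ``cycle'' entries $-\cos\theta_i$ (placed along the 5-cycle $1\!-\!2\!-\!3\!-\!4\!-\!5\!-\!1$), and a complementary cyclic 5-tuple of ``chord'' entries $\cos(\theta_i+\theta_{i+1})$; a chord entry is $\le 0$ only when the relevant pair-sum is $\ge\pi/2$. Among the ten triangles of $K_5$, five contain two chord edges (for example $\{1,3,4\}$, whose chords $(1,3)$ and $(1,4)$ carry the disjoint pair-sums $\theta_1+\theta_2$ and $\theta_4+\theta_5$); the two resulting inequalities $\theta_1+\theta_2\ge\pi/2$ and $\theta_4+\theta_5\ge\pi/2$ add to $\1^T\btheta\ge\pi$, an immediate contradiction. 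The remaining five triangles consist of three consecutive vertices on the 5-cycle, such as $\{1,2,3\}$; by the cyclic symmetry of $S(\btheta)$ (cyclic shifts of rows and columns correspond to cyclic shifts of $\btheta$) I may reduce to the case $T=\{1,2,3\}$, where the three nonpositivity constraints at $(1,2),(2,3),(1,3)$ read $\theta_1,\theta_2\in[0,\pi/2]$ and $\theta_1+\theta_2\ge\pi/2$.

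To finish in this surviving case, I aim to exhibit $\btheta^*\in\R^5_+$ with $\1^T\btheta^*=\pi$ such that $B\ge S(\btheta^*)$. Given such a $\btheta^*$, property~(c) guarantees $S(\btheta^*)\succeq 0$, whence $B=S(\btheta^*)+(B-S(\btheta^*))$ is an SPN decomposition of $B$ (and hence of $A=PBP^T$), contradicting the non-SPN assumption. A natural first candidate is $\btheta^*=(\theta_1,\theta_2,0,\pi-\theta_1-\theta_2,0)$, which leaves the entries of $S$ at $(1,2),(1,3),(2,3)$ unchanged so that the zero-triangle inequalities are automatic; this reduces the task to verifying seven inequalities at the edge positions of $T_5$. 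The hard part of the proof is precisely this verification: for instance $S(\btheta^*)_{14}=S(\btheta^*)_{35}=\cos(\pi-\theta_1-\theta_2)=-\cos(\theta_1+\theta_2)\ge 0$ demands genuine lower bounds on $b_{14}$ and $b_{35}$, and I expect these to follow from copositivity of the principal submatrices of $B$ on $\{1,2,3,i\}$ (whose upper-left $3\times 3$ block is the identity), possibly after a more flexible choice of $\btheta^*$ within the simplex $\theta_3^*+\theta_4^*+\theta_5^*=\pi-\theta_1-\theta_2$.
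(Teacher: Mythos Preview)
Your case split on the ten triangles is correct, and the five ``two-chord'' triangles are indeed killed by the disjoint pair-sum argument. The gap is in the surviving case $T=\{1,2,3\}$: here you only extract the single constraint $\theta_1+\theta_2\ge\pi/2$, and your candidate $\btheta^*=(\theta_1,\theta_2,0,\pi-\theta_1-\theta_2,0)$ fails at several edge positions, not just the two you flag. For instance $S(\btheta^*)_{24}=\cos\theta_2\ge 0$ and $S(\btheta^*)_{25}=\cos\theta_1\ge 0$, so you would also need $b_{24},b_{25}\ge 0$; and $S(\btheta^*)_{45}=\cos(\theta_1+\theta_2)$, which can be strictly larger than $-\cos\theta_4=S(\btheta)_{45}$ (indeed $-\cos\theta_4\ge\cos(\theta_1+\theta_2)$ would force $\theta_1+\theta_2+\theta_4\ge\pi$). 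Since the seven edge entries of $B$ may each be negative, none of these follow from $B\ge S(\btheta)$ or from copositivity of $4\times4$ blocks alone. The heuristic ``a more flexible $\btheta^*$'' is not enough: with only one pair-sum constraint you have a three-parameter family of admissible $\btheta^*$, but you need four chord entries of $S(\btheta^*)$ to be simultaneously $\le$ the corresponding (possibly negative) entries of $B$, and there is no mechanism in your outline to achieve this.

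The paper's proof gets the missing leverage by \emph{first} disposing of the easy sign cases (a degree-$2$ vertex with two edges of the same sign, handled by bordering or by a Schur complement), so that in the residual case every degree-$2$ row of $A$ has one positive and one negative edge entry. Using this full sign pattern, not merely the zero pattern, one shows that $S(\btheta)$ has exactly three positive off-diagonal entries, all of them chord entries, hence \emph{two} of the five chords are nonpositive. These two pair-sum constraints must overlap (else $\1^T\btheta\ge\pi$), say $\theta_4+\theta_5\ge\pi/2$ and $\theta_5+\theta_1\ge\pi/2$; the identity block of $A$ is then forced to sit at $\{1,2,5\}$ or $\{1,4,5\}$, giving a cycle position $\ell\in\{1,4\}$ with $a_{\ell,\ell+1}=0>-\cos\theta_\ell$ and $\sum_{i\neq\ell}\theta_i\ge\pi/2$. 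Now one increases \emph{only} $\theta_\ell$ to make the total $\pi$: all other entries of $S$ on edges are unchanged, and on the two affected positions monotonicity of $\cos$ on $[0,\pi]$ suffices. In short, the step you label ``the hard part'' is exactly where the paper's argument does its work, and it requires the preliminary sign reduction that your plan omits.
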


\begin{proof}
Let $A\in \cop_5$ have $G(A)=T_5$. We may assume that $\diag (A) =\1$.
Let  $i$ be a vertex of degree $2$ in $G(A)$. If the two off-diagonal entries in row $i$ are both positive, then $A$ is
a $4\times 4$ copositive matrix bordered by a nonnegative row and column, and is therefore SPN. If the two off-diagonal entries in row $i$ are both negative,
then $A/A[i]$ is a  $4\times 4$ copositive matrix, and therefore $A/A[i]$ is SPN, and so is $A$. Thus it remains to consider the case that
$A$ has the following pattern, up to permutation of rows and columns:

\begin{equation}A=\left(\begin{array}{ccccc}
               1 & 0 & 0 & - & + \\
               0 & 1 & 0 & - & + \\
               0 & 0 &1 & + & -\\
               - & - & + & 1 & - \\
               +& + &  - &-  & 1
             \end{array}\right)\label{eq:G(A)=T5}\end{equation}
Suppose on the contrary that $A$ is not SPN.
By \cite[Corollary 5.8]{DickinsonDuerGijbenHildebrand2013a} (and since $\diag(A)=\1$),
$P^TAP\ge S(\btheta)$ for some $\btheta\in \R^5_+$ such that \begin{equation}\sum_{i=1}^5\theta_i<\pi, \label{eq:1^Ttheta<pi} \end{equation}
and some permutation matrix $P$.
Let $S=S(\btheta)$. Since $P^TAP\ge S$, $S$ has at most three positive entries above the diagonal.
The matrix $S$ cannot have a row $i$ with no positive entry, because otherwise it would be SPN (since the $4\times 4$ copositive matrix $S/S[i]$ would be SPN).
Thus $S$ has exactly three positive entries above the diagonal, and by the pattern of $A$, these have to be in the same positions as the positive entries of $P^TAP$.
By considering $P^TAP$ instead of $A$, we may assume that $A\ge S$.
Let $B$ be defined by setting $b_{ij}=s_{ij}$ whenever $\{i,j\}$ is an edge in $G(A)$, and $b_{ij}=a_{ij}=0$ otherwise. Then $A\ge B\ge S$, so $B$ is also copositive and not SPN. We assume therefore that $A=B$. That is, $A\ge S$ is not SPN, and  $a_{ij}=s_{ij}$ for every edge $ij$ of $G(A)$, and show that in this case
 there exists $\btheta'\in \R^5_{+}$
such that $\1^T\btheta'=\pi$ and $A\ge S(\btheta')$. Since such $S(\btheta')$ is positive semidefinite, this contradicts the assumption that $A$ is not SPN.

We first note that \begin{equation}\theta_i\le \pi/2 \text{ for every } 1\le i\le 5 .\label{eq:each<=pi/2}\end{equation}
For if, say, $\theta_5>\pi/2$, then
$\sum_{i=1}^4\theta_i<\pi/2$, and we would have  $\cos(\theta_i+\theta_{i\plmod 1})>0$ for $i=1,2, 3$, in addition to $-\cos(\theta_5)>0$,
which would mean that $S$ has at least four positive entries above the diagonal, contradicting the assumption.
So $s_{i,i\plmod 1}\le 0$ for every $i=1, \dots, 5$, and the three positive entries are
all of the form $s_{i,i\plmod 2}=\cos(\theta_i+\theta_{i\plmod 1})$. There exist  $j\ne k$ such that $s_{j,j\plmod 2}\le 0$ and $s_{k,k\plmod 2}\le  0$, that is,
$\theta_i+\theta_{i\plmod 1}\ge \pi/2$, for $i\in \{j, k\}$. Since $\sum_{i\in  \{j, j\plmod 1\}\cup \{k, k\plmod 1\}}\theta_i<\pi$, we must have
$\{j, j\plmod 1\}\cap \{k, k\plmod 1\}\ne \emptyset$. By further permuting row and columns we may assume that
 \begin{equation} \theta_4+\theta_{5}\ge\pi/2 \mbox{  and  } \theta_5+\theta_{1}\ge\pi/2  \label{eq:4+5,5+1}\end{equation}
As the sum of all elements in $\btheta$ is less than $\pi$,
(\ref{eq:4+5,5+1}) implies that $\sum_{i=1}^3\theta_i<\pi/2$ and $\sum_{i=2}^4 \theta_i<\pi/2$.
So the entries  $s_{i,i\plmod 2}=\cos(\theta_i+\theta_{i\plmod 1})$, $i=1,2, 3$,  in positions $(1,3)$, $(2,4)$ and $(3,5)$, are
the three positive entries of $S$. The entries $s_{i,i\plmod 1}=-\cos(\theta_i)$, $i=1, \dots ,4$, are negative. And $s_{15}=-\cos(\theta_5)$, $s_{14}=\cos(\theta_4+\theta_{5})$ and $s_{25}=\cos(\theta_5+\theta_{1})$
are all nonpositive, and are the only entries above the diagonal of $S$ that may be zero.   There is a $3\times 3$ identity principal submatrix  in $A$. None of the
entries $(1,3)$, $(2,4)$, or $(3,5)$ is in this principal submatrix,   so either $A[1,2,5]=I$ or $A[1,4,5]=I$. In the first case $a_{12}=0>s_{12}$ and in
the second $a_{45}=0>s_{45}$. Let $\ell$ be either $1$ or $4$, satisfying $a_{\ell, \ell\plmod 1}=0>s_{\ell, \ell\plmod 1}$. By (\ref{eq:4+5,5+1})
\begin{equation}\sum_{i\ne \ell}\theta_i \ge \pi/2. \label{eq:sumnell}\end{equation}
Let $\btheta'\in \R^5_+$ have
\[\theta'_i=\left\{\begin{array}{ll}\theta_i, \quad & i\ne \ell \\
    \pi-\sum_{i\ne \ell} \theta_i\, ,\quad&  i=\ell  \end{array}
    \right. .
\]
Then \begin{equation}\theta_\ell<\theta'_\ell\le\pi/2.\label{eq:theta'ell}\end{equation}
(The left inequality follows from (\ref{eq:1^Ttheta<pi}), the right one from (\ref{eq:sumnell}).)
Of course, $\theta_i\le \theta'_i$ for every $i$, and $\1^T\btheta'=\pi$. Thus $S(\btheta')$ is positive semidefinite, and $A\ge S(\btheta')$. To assert this latter inequality observe that $a_{\ell,\ell\plmod 1}=0\ge  (S(\btheta'))_{\ell, \ell\plmod 1}$ by (\ref{eq:theta'ell}), and for $i\ne \ell $ we have $a_{i,i\plmod 1} \ge (S(\btheta))_{i, i\plmod 1} = (S(\btheta'))_{i, i\plmod 1}$.
As $\pi \ge \theta'_i+\theta'_{i\plmod 1}\ge \theta_i+\theta_{i\plmod 1}\ge 0$ for every $i$  and $\cos(t)$ decreases on $[0,\pi]$, $a_{i,i\plmod 2} \ge (S(\btheta))_{i, i\plmod 2} \ge (S(\btheta'))_{i, i\plmod 2}$.
\end{proof}

Instead of \cite[Theorem 9.2]{Shaked2016}, which states that every $K_{2,n}$ is SPN, we prove:

\begin{theorem}
If $T_{n+1}$ is SPN, then the complete bipartite graph $K_{2,n}$ is SPN.
\end{theorem}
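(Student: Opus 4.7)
The plan is to split into two cases based on the sign pattern of the entries between the two small-side vertices $1,2$ (with $A_{12}=0$) and the large-side vertices $3,\ldots,n+2$ of any copositive $A$ with $G(A)=K_{2,n}$ and $\diag A=\1$. The edges of $K_{2,n}$ force $A_{1i},A_{2i}\ne 0$ for each $i\ge 3$, while $A_{ij}=0$ for distinct $i,j\ge 3$. The key dichotomy is whether some bottom vertex $i$ has both $A_{1i}<0$ and $A_{2i}<0$ (call this type ``$(-,-)$'') or not.

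\textbf{Case A.} Some bottom $i$ is of type $(-,-)$. Row $i$ of $A$ then has all off-diagonal entries nonpositive (two negatives at positions $1,2$ and zeros elsewhere). The standard Schur-complement argument (plug $x_i:=-\sum_{j\ne i}A_{ij}y_j/A_{ii}\ge 0$ into $x^T A x\ge 0$ for arbitrary $y\ge 0$) shows $B:=A/A[i]$ is copositive. The only off-diagonal entry of $B$ that differs from the principal submatrix $A'$ of $A$ with $i$ deleted is $B_{12}=-A_{1i}A_{2i}<0$, so $G(B)=T_{n+1}$. The hypothesis yields $B=P_B+N_B$ with $P_B\succeq 0$, $N_B\ge 0$. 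I would then lift this to $A$: writing $A=\begin{pmatrix}A' & v\\ v^T & 1\end{pmatrix}$ with $i$ permuted to the last row/column and $v$ supported on $\{1,2\}$, the identity $A'=B+vv^T$ gives
\[
A=\begin{pmatrix}P_B & 0\\ 0 & 0\end{pmatrix}+\begin{pmatrix}v\\ 1\end{pmatrix}\begin{pmatrix}v^T & 1\end{pmatrix}+\begin{pmatrix}N_B & 0\\ 0 & 0\end{pmatrix},
\]
PSD plus rank-$1$ PSD plus nonnegative, so $A$ is SPN.

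\textbf{Case B.} No bottom vertex is of type $(-,-)$, so for each $i\ge 3$ at most one of $A_{1i},A_{2i}$ is negative. Let $C$ be the $n\times 2$ top-to-bottom block of $A$ (with row $i-2$ equal to $(A_{1i},A_{2i})$), and split $C=C_+-C_-$ into entrywise nonnegative parts with disjoint supports. I would propose the direct decomposition
\[
A=\begin{pmatrix}I_2 & -C_-^T\\ -C_- & I_n\end{pmatrix}+\begin{pmatrix}0 & C_+^T\\ C_+ & 0\end{pmatrix}=:P+N,
\]
so that $N\ge 0$ holds by construction. By Schur complement, $P\succeq 0$ iff $C_-^T C_-\preceq I_2$. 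The Case B assumption is precisely that no row of $C_-$ has two nonzero entries, which forces $C_-^T C_-$ to be $2\times 2$ diagonal with entries $\sum_{i:A_{1i}<0}A_{1i}^2$ and $\sum_{i:A_{2i}<0}A_{2i}^2$. Plugging $x=(t,0,z_3,\ldots,z_{n+2})\ge 0$ with $z_i:=|A_{1i}|$ on $\{A_{1i}<0\}$ and zero otherwise into $x^T A x\ge 0$ and minimizing over $t\ge 0$ bounds the first sum by $1$; the symmetric choice handles the other.

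The main obstacle I anticipate is Case B. A tempting but failing route is to iterate ``strip off a nonnegative row and induct'', since this would require $K_{2,n-1}$ and hence $T_n$ to be SPN, which is not granted. The resolution is the direct PSD-plus-nonnegative decomposition above, whose correctness rests on the observation that ``no $(-,-)$-type bottom'' is exactly the condition that zeros out the off-diagonal entry of $C_-^T C_-$, decoupling the Schur-complement PSD check into two independent one-dimensional copositivity inequalities that $A$ satisfies automatically. Case A then uses only the $T_{n+1}$ hypothesis together with the standard lifting of SPN decompositions through a Schur-complement pivot.
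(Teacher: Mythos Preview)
Your proof is correct. Case~A coincides with the paper's Schur-complement step, but Case~B takes a genuinely different route. The paper argues by induction on $n$: since $T_{n+1}$ is SPN, so is its induced subgraph $T_n$, hence by the induction hypothesis $K_{2,n-1}$ is SPN, and therefore every proper subgraph of $K_{2,n}$ is SPN; a reduction from \cite{Shaked2016} then allows one to assume $G_-(A)$ is connected, which forces some degree-$2$ vertex to carry two negative edges---precisely your Case~A. So the paper never faces Case~B directly; it is absorbed into the induction and the $G_-$-connectivity lemma. You instead dispatch Case~B with an explicit decomposition $A=P+N$, using the Case-B hypothesis exactly to make $C_-^{T}C_-$ diagonal and then reading off the two bounds $\sum_{i:A_{ki}<0}A_{ki}^2\le 1$ ($k=1,2$) straight from the copositivity of $A$. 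This is more self-contained: it avoids both the induction and the appeal to the external reduction lemma. One small correction to your commentary: the route you call ``not granted'' actually is available, since SPN-ness passes to induced subgraphs and hence $T_{n+1}$ SPN does yield $T_n$ SPN---this is exactly what the paper exploits---but your direct construction makes that detour unnecessary.
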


The proof is essentially the same as the proof of  \cite[Theorem 9.2]{Shaked2016}.

\begin{proof}
By induction on $n$. For $n\le 2$ this holds since every graph on at most $4$ vertices is SPN. If $n>2$ and $T_{n+1}$ is SPN, then its subgraph $T_{n}$ is SPN, and
by the induction hypothesis
$K_{2,n-1}$ is SPN. This implies that each proper subgraph of $K_{2,n}$  is SPN.
Thus we only need to consider the case that $A\in \cop$ has a connected $G_-(A)$. In this case, there exists a vertex $i$ of degree $2$
in $\mathcal{G}(A)$, which is incident with two negative edges. By \cite[Lemma 3.4]{Shaked2016}  $A/A[i]$ is copositive, and since $G(A/A[i])=T_{n+1}$,
it is SPN. Thus $A$ is SPN.
\end{proof}

Since we only know at this point that $T_n$, $n\le 5$, is SPN, we can only deduce that $K_{2,n}$, $n\le 4$ is SPN.

\def\cprime{$'$}

\end{document}